\documentclass[conference,letterpaper]{IEEEtran}

% correct bad hyphenation here
\hyphenation{op-tical net-works semi-conduc-tor}

%========================================================================

\makeatletter
 \def\LaTeX{\leavevmode L\raise.42ex
   \hbox{\kern-.3em\size{\sf@size}{0pt}\selectfont A}\kern-.15em\TeX}
\makeatother

\newcommand{\BibTeX}{{\rm B\kern-.05em{\sc
i\kern-.025emb}\kern-.08em\TeX}}
\newtheorem{col}{Corollary}[section]
\newtheorem{thm}{Theorem}[section]
\newtheorem{lem}[thm]{Lemma}
\newtheorem{rem}{Remark}
\newtheorem{defn}{Definition}[section]

%\numberwithin{equation}{section}

\begin{document}

\title{Sampling solutions of Schr\"{o}dinger equations on combinatorial graphs}

\author{\IEEEauthorblockN{Isaac Z. Pesenson}
\IEEEauthorblockA{
 Temple University  \\
Philadelphia, USA\\
Email: pesenson@temple.edu}}

\maketitle

\begin{abstract}

 We consider functions on a graph $G$ whose evolution in time $-\infty<t<\infty$ is governed by a Schr\"{o}dinger type equation with a combinatorial Laplace operator on the right side.  For a given subset $S$ of vertices of $G$ we compute a cut-off frequency $\omega>0$ such that 
 solutions to a Cauchy problem  with initial data in $PW_{\omega}(G)$ are  completely determined by their  samples on $S\times \{k\pi/\omega\},$ where $k\in \mathbf{N}$. It is shown that in the case of a bipartite graph  our results are sharp.

\end{abstract}

\IEEEpeerreviewmaketitle

\section{Introduction}
A sampling theory of bandlimited (Paley-Wiener) functions on combinatorial graphs was initiated in \cite{Pe2008}, \cite{Pe2009}  and currently became a rather active field of research \cite{Pe2010}-\cite{fp}, \cite{NGO}, \cite{SNFOV}. In all of these papers one considered stationary bandlimited signals on graphs. The novelty of the present paper is that we consider non-stationary signals whose evolution is governed by a Schr\"{o}dinger type equation with a combinatorial Laplace operator on the right side. The goal of the paper is to show that solutions of such equations with  bandlimited initial data can be perfectly reconstructed from their samples on the graph and on time axis.

\section{Combinatorial Laplacian  on graphs}

We consider a   graph $G=(V,E)$, where $V = V(G) $ is a countable set of  vertices and $E = E(G)$ is the set of edges or links connecting these vertices.  The weight of the edge connecting two nodes $u$ and $v$ is denoted by $w(u,v)$. The degree $\mu(v)$ of the vertex $v$ is the sum of the edge weights incident to node $v$. The adjacency matrix $W$ of the graph is a matrix such that $W(u,v) = w(u,v)$. 
The Hilbert space $L_{2}(G)$ is the set  of all complex valued functions $f$ on $V(G)$ 
 with the following inner product
\begin{equation}\label{gg0} 
{\langle f,g\rangle}_{L_2(G)}= \langle f,g\rangle = \sum_{v\in V(G)}f(v)\overline{g(v)}\,\mu(v)<\infty.
\end{equation}
The weighted Laplace operator $\Delta$  is introduced  via
\begin{equation}\label{L}
 (\Delta f)(v) = \sum_{u \in V(G)} (f(v)-f(u)) w(v,u)~.
\end{equation}

The  graph Laplacian is a well-studied object; it is known to be a positive-semidefinite self-adjoint  operator. If the set of degrees $\{\mu(v)\}_{v\in V(G)}$ is bounded then the operator $\Delta$ is bounded. 

If the set $V(G)$ is finite then $\Delta$  has $|V(G)|$ real and nonnegative eigenvalues. Since $\Delta$\textbf {1} = 0, where \textbf {1 = (1, 1, \dots, 1)}, is the all 1 constant function, zero is an eigenvalue of $\Delta$ corresponding to the eigenfunction \textbf 1.  Moreover, $0$ is a simple eigenvalue if and only if graph is connected. 
If  $0=\lambda_{0}<\lambda_{1}\leq \dots \leq \lambda_{|V|-1},\>\>\>|V|=|V(G)|,$ is the set of eigenvalues of $\Delta$ the notation  $e_{\lambda_{0}},....,e_{\lambda_{|V|-1}}$ will be used for a corresponding   orthonormal basis of eigenfunctions. For a function $f\in L_{2}(G)$ it's Fourier coefficients $
c_{j}(f)
$ are defined as usual
$$
c_{j}(f)=\sum_{v\in V(G)} f(v)\overline{e_{\lambda_{j}}(v)}.
$$

\section{ Paley-Wiener functions on graphs}

One of the forms of the Spectral Theorem for self-adjoint non-negative operators implies that every function in $L_{2}(G)$ can be identified with a measure on positive semiaxis. 

\begin{defn} (\cite{Pe2008})
We say  that 
a  function $f\in L_{2}(G)$ belongs to a space $PW_{\omega}(G),\>\omega>0,$ (or is  $\omega$-bandlimited)  if the corresponding measure is supported in $[0, \>\omega]$. 
\end{defn}

One can show \cite{Pe2008} that $f\in PW_{\omega}(G)$ if and only if the following Bernstein inequality holds
$$
\|\Delta^{k} f\|\leq \omega^{k}\|f\|,  \>\>k\in \mathbf{N}_{+}.
$$

Note that in the case of a finite graph it means that  fa unction belongs to $PW_{\omega}(G)$  if and only if it is a polynomial in eigenfunctions whose eigenvalues are not greater $\omega$, i. e. 
$$
f=\sum_{\lambda_{j}\leq \omega} c_{j}e_{\lambda_{j}}.
$$

	\section{Sampling solutions of Schr\"{o}dinger equation on time axis}
	We consider the following Cauchy problem 
\begin{equation}\label{C1}
\frac{dg(t, v)}{dt}=i\Delta g(t, v), g(0, v)=f(v), 
\end{equation}
where $v\in V(G),  t\in \mathbf{R}$.

If the initial function $f$ belongs to the domain of the operator $\Delta$ then the unique solution to this problem is given by the formula $g(v,\>t)=e^{it\Delta}f(v),\>\>-\infty<t<\infty,\>\>v\in V(G), $ where $e^{it\Delta}$ is a group of unitary operators in $L_{2}(G)$.

The next theorem is a generalization of what is known as the Valiron-Tschakaloff
sampling/interpolation formula \cite{Pes15}. It shows that if the initial data belongs to $PW_{\omega}(G)$ then the solution to (\ref{C1}) is completely determined by $f,\> \Delta f$ and its 
values at $k\pi/\omega$.
\begin{thm}\label{T100}
For $f\in PW_{\omega}(G),\>\>\>\omega>0,$ we have for all $t  \in \mathbf{R}$
\begin{equation}\label{VT}
g(t,\>v)=
i t\ sinc\left(\frac{\omega  t}{\pi}\right)\Delta f(v)+ sinc\left(\frac{\omega  t}{\pi}\right)f(v)+
$$
$$
\sum_{k\in \mathbf{N},\>k\neq 0}\frac{\omega t}{k\pi} sinc\left(\frac{\omega  t}{\pi}-k\right)g\left(\frac{k\pi}{\omega},\>v\right),
\end{equation}
where convergence is in the space of abstract functions $L_{2}\left((-\infty, \infty), \> L_{2}(G)\right)$ with the regular Lebesgue measure.
\end{thm}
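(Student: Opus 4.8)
The plan is to reduce the operator identity (\ref{VT}) to the \emph{scalar} Valiron-Tschakaloff formula of \cite{Pes15} by means of the spectral calculus for $\Delta$. Let $E_\lambda$ denote the spectral resolution of the self-adjoint operator $\Delta$. Since $f\in PW_\omega(G)$ its spectral measure is supported in $[0,\omega]$, so $f=\int_{0}^{\omega}dE_\lambda f$, and because $e^{it\Delta}$ acts as the multiplier $e^{it\lambda}$ on the spectral side,
\begin{equation}\label{specg}
g(t,\cdot)=e^{it\Delta}f=\int_{0}^{\omega}e^{it\lambda}\,dE_\lambda f .
\end{equation}
For each fixed $\lambda\in[0,\omega]$ the function $t\mapsto e^{it\lambda}$ is entire of exponential type at most $\omega$, hence bandlimited, so the scalar Valiron-Tschakaloff formula applies to it. Evaluating $e^{it\lambda}$, its $t$-derivative, and its samples at $t=0$ and $t=k\pi/\omega$ gives
\begin{equation}\label{scalarvt}
e^{it\lambda}= sinc\left(\frac{\omega t}{\pi}\right)+it\lambda\, sinc\left(\frac{\omega t}{\pi}\right)+\sum_{k\neq0}\frac{\omega t}{k\pi}\, sinc\left(\frac{\omega t}{\pi}-k\right)e^{ik\pi\lambda/\omega}.
\end{equation}

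Next I would substitute (\ref{scalarvt}) into (\ref{specg}) and interchange the summation over $k$ with the spectral integral $\int_0^\omega dE_\lambda f$. Reading off the three resulting pieces and using that $\Delta$ has spectral multiplier $\lambda$, one has $\int_0^\omega dE_\lambda f=f$, $\int_0^\omega\lambda\,dE_\lambda f=\Delta f$, and $\int_0^\omega e^{ik\pi\lambda/\omega}\,dE_\lambda f=e^{i(k\pi/\omega)\Delta}f=g\left(k\pi/\omega,\cdot\right)$. These are precisely the three groups of terms on the right-hand side of (\ref{VT}), so once the interchange is justified the identity follows at once. On a finite graph the same computation is transparent in the eigenbasis: one writes $f=\sum_{\lambda_j\le\omega}c_j e_{\lambda_j}$, applies (\ref{scalarvt}) to each $e^{it\lambda_j}$, and resums using $\Delta e_{\lambda_j}=\lambda_j e_{\lambda_j}$.

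The main obstacle is the convergence, i.e.\ transferring the convergence of the scalar series (\ref{scalarvt}) to the vector-valued space $L_2\left((-\infty,\infty),L_2(G)\right)$. I would control it using two observations. First, for fixed $t$ the coefficients $c_k(t)=\frac{\omega t}{k\pi} sinc\left(\frac{\omega t}{\pi}-k\right)$ decay like $O(k^{-2})$ and are square-summable, with the decay uniform on bounded time intervals. Second, the samples $g\left(k\pi/\omega,\cdot\right)=e^{i(k\pi/\omega)\Delta}f$ all have the \emph{same} norm $\|f\|$ in $L_2(G)$, since $e^{i(k\pi/\omega)\Delta}$ is unitary; hence the $L_2(G)$-norm of the tail $\sum_{|k|>N}c_k(t)e^{ik\pi\lambda/\omega}$ integrated against $dE_\lambda f$ is bounded by $\|f\|\sum_{|k|>N}|c_k(t)|$, which tends to $0$ uniformly on bounded $t$-intervals. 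Combining this with a dominated-convergence argument against the finite measure $dE_\lambda f$ on the compact spectrum $[0,\omega]$ upgrades the scalar convergence to convergence of the partial sums in the asserted sense. The one point I would flag is that, because $\|g(t,\cdot)\|_{L_2(G)}=\|f\|$ is constant in $t$, this convergence must be read in the local $L_2$ sense on bounded time intervals rather than over the whole line; this is the natural reading of the statement, and it is where all of the analytic care is concentrated.
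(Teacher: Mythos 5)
Your argument is correct, but it takes a genuinely different route from the paper's. The paper also reduces matters to the scalar Valiron--Tschakaloff formula of \cite{BFHSS}, but it does so by duality rather than by spectral calculus: it quotes from \cite{Pes15} that $e^{it\Delta}f$ is a bounded $L_2(G)$-valued abstract function extending to an entire function of exponential type $\omega$, applies the scalar formula to $F(t)=\left<e^{it\Delta}f,g\right>$ for an arbitrary $g\in L_{2}(G)$, and then invokes the Hahn--Banach theorem to pass from the resulting weak identity to the vector-valued one. Your fiberwise approach --- applying the scalar formula to $e^{it\lambda}$ for each $\lambda\in[0,\omega]$ and integrating against $dE_\lambda f$ --- buys two things: it avoids having to establish a priori that $e^{it\Delta}f$ is entire of type $\omega$ (the type bound for $e^{it\lambda}$ with $\lambda\le\omega$ is immediate), and it yields a sharper mode of convergence, namely absolute convergence of the series in the $L_2(G)$ norm, uniformly on bounded $t$-intervals, via the $O(k^{-2})$ decay of the coefficients $\frac{\omega t}{k\pi}\,sinc\left(\frac{\omega t}{\pi}-k\right)$ and the unitarity of $e^{i(k\pi/\omega)\Delta}$. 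The paper's Hahn--Banach step by itself identifies the limit only weakly and leaves the mode of convergence implicit, so your version is in this respect more complete. Your closing caveat is also well taken: since $\|g(t,\cdot)\|_{L_2(G)}=\|f\|$ for all $t$, the convergence asserted in $L_{2}\left((-\infty,\infty),L_{2}(G)\right)$ can only be meant locally in $t$, and your uniform-on-compacta estimate delivers exactly that.
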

\begin{proof}
If $f\in PW_{\omega}(G)$ then one can show \cite{Pes15} that $e^{it\Delta}f$ is an abstract function with values in $L_{2}(G)$  which is bounded for $t\in \mathbf{R}$ and has extension to complex plane as entire function of exponential type $\omega$. For this reason for any $g\in L_{2}(G)$  the scalar-valued function $F(t)=\left<e^{it\Delta}f, g\right>$ is an entire function of exponential type $\omega$ which is bounded on the real line.  For such functions the so-called  Valiron-Tschakaloff sampling/interpolation formula holds \cite{BFHSS}

\begin{equation}\label{vt}
F(t)=t \> sinc\left(\frac{\omega  t}{\pi}\right)F^{'}(0)+ sinc\left(\frac{\omega  t}{\pi}\right)F(0)+
$$
$$
\sum_{k\neq 0}\frac{\omega t}{k\pi} sinc\left(\frac{\omega  t}{\pi}-k\right)F\left(\frac{k\pi}{\omega}\right).
\end{equation}

The formula (\ref{VT}) now follows from (\ref{vt}) and  the Hahn-Banach theorem. 

\end{proof}

The next Corollary follows from the fact that if $\Delta$ is a bounded operator then every function in $L_{2}(G)$ belongs to $PW_{\omega}(G)$ for any $\omega\geq\|\Delta\|.$

\begin{col}
If  operator $\Delta$ is bounded in the space $L_{2}(G)$ then the previous Theorem holds for every $f\in L_{2}(G)$ as long as $\omega\geq \|\Delta\|$.

\end{col}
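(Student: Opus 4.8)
The plan is to reduce the statement directly to Theorem~\ref{T100} by showing that, under the stated hypotheses, the condition $f\in PW_{\omega}(G)$ is satisfied automatically by every $f\in L_{2}(G)$. The whole corollary then becomes a specialization of the theorem rather than a new argument. First I would recall the Bernstein-type characterization stated just after the definition of $PW_{\omega}(G)$: a function $f\in L_{2}(G)$ belongs to $PW_{\omega}(G)$ if and only if $\|\Delta^{k}f\|\leq \omega^{k}\|f\|$ for every $k\in \mathbf{N}_{+}$. Thus it suffices to verify this chain of inequalities for an arbitrary $f$.

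Second, I would invoke submultiplicativity of the operator norm, $\|\Delta^{k}\|\leq \|\Delta\|^{k}$, valid for any bounded operator. Then for every $f\in L_{2}(G)$ and every $k$ one has $\|\Delta^{k}f\|\leq \|\Delta\|^{k}\|f\|\leq \omega^{k}\|f\|$ precisely when $\omega\geq \|\Delta\|$, so that $f\in PW_{\omega}(G)$. Equivalently, and perhaps more transparently, I could argue spectrally: since $\Delta$ is positive-semidefinite and self-adjoint, its spectrum is contained in $[0,\|\Delta\|]$, hence the measure associated with any $f$ is supported in $[0,\|\Delta\|]\subseteq [0,\omega]$, which is exactly the defining condition for $PW_{\omega}(G)$. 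With $f\in PW_{\omega}(G)$ established for all $f\in L_{2}(G)$, the representation~(\ref{VT}) of Theorem~\ref{T100} applies verbatim and yields the conclusion.

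I do not anticipate a genuine obstacle here, as the corollary is essentially a restatement of the theorem in the bounded setting. The only points requiring mild care are that the norm bound must hold uniformly in $k$, which submultiplicativity supplies, and that the non-strict inequality $\omega\geq \|\Delta\|$ is the correct threshold, since the spectrum is closed and may reach $\|\Delta\|$.
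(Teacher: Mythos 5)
Your argument is correct and follows exactly the route the paper takes: the paper's entire justification is the observation that boundedness of $\Delta$ forces every $f\in L_{2}(G)$ into $PW_{\omega}(G)$ once $\omega\geq\|\Delta\|$, after which Theorem~\ref{T100} applies verbatim. You simply supply the details (via the Bernstein characterization with submultiplicativity, or equivalently the spectral support argument) that the paper leaves implicit.
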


\section{Sampling of Paley-Wiener functions on graphs}

\begin{defn} (\cite{Pe2008})
A subset of vertices $S\subset V(G)$ is a sampling set  for a space $PW_{\omega}(G), \> \omega>0$, if there exists two positive constants $c, \>C$ such that
\begin{equation}
c\|f\|^{2}\leq \sum_{s\in S} |f(s)|^{2}\leq C\|f\|^{2}
\end{equation}
\end{defn}

We introduce a measure of "complexity" of a subset of vertices $U\subset V(G)$.
For a subset of nodes $U\subset V(G)$ let  $L_2(U)$ be the subspace of functions in $L_{2}(G)$ which are identical to zero on the compliment $U^{c}=V(G)\setminus U$.

\begin{defn} \label{D1}(\cite{Pe2008})
For a given $\Lambda>0$ we say that $U\subset V(G)$ is $\Lambda$-removable  if any $\varphi \in L_2(U)$ admits a Poincare-type inequality with the constant $\Lambda^{-1} $, i.e., 
\begin{equation}
\|\varphi\|\leq \Lambda^{-1}\|\Delta \varphi\|,\>\>\>\varphi\in L_{2}(U).
\end{equation}
The best constant $\Lambda $ in this inequality is denoted by $\Lambda_{S}$ and called the Poincare constant.
\end{defn}

Note, that since $\lambda_{0}=0$ is an eigenvalue of  $\Delta$ the set of all vertices $V(G)$ is not removable for any $\Lambda>0$.
However, one can show that every proper finite subset of vertices of a graph (finite or infinite) is removable for a certain constant. 

We also note that the Poincare constant $\Lambda_{S}$ is a measure of a size (capacity) of a  set $S$.

The previous  definition is justified by the following Theorem.
\begin{thm}\label{T1}(\cite{Pe2008})
If for a set $S\subset V(G)$   its compliment $S^{c}$ is a $\Lambda_{S^{c}}$-removable then $S$ is a 
sampling set for any space $PW_{\omega}(G)$ with $0< \omega<
\Lambda_{S^{c}}$. In particular, every $f\in PW_{\omega}(G)$ is completely determined by its values on $S$. 
\end{thm}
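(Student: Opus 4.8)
The plan is to deduce both frame inequalities in the definition of a sampling set directly from the two estimates already available: the Bernstein inequality $\|\Delta f\|\le\omega\|f\|$ valid for $f\in PW_{\omega}(G)$, and the Poincar\'e inequality $\|\varphi\|\le\Lambda_{S^{c}}^{-1}\|\Delta\varphi\|$ of Definition \ref{D1}, valid for every $\varphi\in L_{2}(S^{c})$. The upper frame bound is routine: restricting the weighted norm to $S$ gives $\sum_{s\in S}|f(s)|^{2}\le(\inf_{s\in S}\mu(s))^{-1}\|f\|^{2}$, so one may take $C=(\inf_{s\in S}\mu(s))^{-1}$ as soon as the weights are bounded below on $S$. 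The entire difficulty, and the reason for the hypothesis $\omega<\Lambda_{S^{c}}$, lies in the lower bound, from which the uniqueness (``completely determined'') statement will also fall out for free.

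For the lower bound I would split each $f\in PW_{\omega}(G)$ as $f=\psi+\varphi$, where $\psi=f|_{S}\in L_{2}(S)$ is the restriction of $f$ to the sampling set extended by zero, and $\varphi=f|_{S^{c}}\in L_{2}(S^{c})$. Since $\varphi$ is supported on $S^{c}$, the Poincar\'e inequality applies to it, and the key move is to rewrite $\Delta\varphi=\Delta f-\Delta\psi$ so as to feed both remaining estimates into a single chain:
\[
\|\varphi\|\le\Lambda_{S^{c}}^{-1}\|\Delta\varphi\|\le\Lambda_{S^{c}}^{-1}\bigl(\|\Delta f\|+\|\Delta\psi\|\bigr)\le\Lambda_{S^{c}}^{-1}\bigl(\omega\|f\|+\|\Delta\psi\|\bigr),
\]
the last step being Bernstein's inequality. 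Combining this with $\|f\|\le\|\psi\|+\|\varphi\|$ and solving for $\|f\|$ — which is legitimate precisely because $\omega<\Lambda_{S^{c}}$ makes the coefficient $1-\omega/\Lambda_{S^{c}}$ strictly positive — yields
\[
\Bigl(1-\frac{\omega}{\Lambda_{S^{c}}}\Bigr)\|f\|\le\|\psi\|+\Lambda_{S^{c}}^{-1}\|\Delta\psi\|.
\]
Finally I would bound $\|\Delta\psi\|\le\|\Delta\|\,\|\psi\|$, using that $\Delta$ is bounded when the degrees are bounded (or, for an infinite graph with finite $S$, that $\psi$ has finite support so $\Delta\psi$ is controlled locally). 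This turns the right-hand side into a multiple of $\|\psi\|=(\sum_{s\in S}|f(s)|^{2}\mu(s))^{1/2}$, and after passing from the weighted to the unweighted sample sum one obtains $c\|f\|^{2}\le\sum_{s\in S}|f(s)|^{2}$ with an explicit $c$ depending on $\omega/\Lambda_{S^{c}}$, $\|\Delta\|$ and the weights on $S$.

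The main obstacle I anticipate is exactly the appearance of $\Delta\varphi$: the Laplacian does not commute with restriction to $S^{c}$, so $\Delta\varphi$ cannot be estimated directly, and the Poincar\'e inequality cannot be iterated because $\Delta\varphi$ need not lie in $L_{2}(S^{c})$. The decomposition $\Delta\varphi=\Delta f-\Delta\psi$ sidesteps this by transferring the bandlimitedness onto $\Delta f$ through Bernstein and isolating the genuinely sample-dependent term $\Delta\psi$. The uniqueness assertion is then a byproduct: if $f\in PW_{\omega}(G)$ vanishes on $S$ then $\psi=0$, so $\varphi=f$ lies in both $L_{2}(S^{c})$ and $PW_{\omega}(G)$, and the two estimates collapse to $\Lambda_{S^{c}}\|f\|\le\|\Delta f\|\le\omega\|f\|$ with $\omega<\Lambda_{S^{c}}$, forcing $f=0$.
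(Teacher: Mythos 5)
Your argument is correct (granting the standing assumptions that $\Delta$ is bounded and that the degrees $\mu(s)$ are bounded above and below on $S$, which you flag and which the paper needs anyway to pass between the weighted norm and the unweighted sample sums), and it is essentially the intended argument: the paper states Theorem \ref{T1} without proof, citing \cite{Pe2008}, where the same decomposition $f=f|_{S}+f|_{S^{c}}$, fed through the Poincar\'e inequality on $L_{2}(S^{c})$, the triangle inequality for $\Delta\varphi=\Delta f-\Delta\psi$, and the Bernstein inequality, with $\omega<\Lambda_{S^{c}}$ making the absorbed coefficient positive, yields both the uniqueness claim and the Plancherel--P\'olya (frame) bounds.
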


Note, that this statement can be reformulated as the following  uncertainty principle.

\begin{thm}\label{U1}
If $S\subset V(G)$ is a zero set of a non-trivial  function in $PW_{\omega}(G)$ then $
\omega \Lambda_{S^{c}}>1.
$

\end{thm}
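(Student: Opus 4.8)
The plan is to derive this statement as a contrapositive reformulation of Theorem~\ref{T1}. Suppose $S\subset V(G)$ is the zero set of some non-trivial $f\in PW_{\omega}(G)$, meaning $f(s)=0$ for all $s\in S$ while $f\not\equiv 0$. The goal is to show $\omega\Lambda_{S^{c}}>1$. I would argue by contraposition: assume instead that $\omega\Lambda_{S^{c}}\leq 1$, i.e. $\omega\leq\Lambda_{S^{c}}$, and show this forces every function in $PW_{\omega}(G)$ that vanishes on $S$ to be identically zero, contradicting the existence of our non-trivial $f$.

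The key step is to invoke Theorem~\ref{T1}. That theorem says that if the complement $S^{c}$ is $\Lambda_{S^{c}}$-removable, then $S$ is a sampling set for every $PW_{\omega}(G)$ with $0<\omega<\Lambda_{S^{c}}$, and in particular every function in such a space is completely determined by its values on $S$. Thus, under the assumption $\omega<\Lambda_{S^{c}}$, a function in $PW_{\omega}(G)$ whose values on $S$ are all zero must coincide with the zero function (since the zero function trivially has the same samples on $S$, and determination by samples on $S$ is exactly uniqueness). This contradicts $f\not\equiv 0$. Hence we cannot have $\omega<\Lambda_{S^{c}}$, forcing $\omega\geq\Lambda_{S^{c}}$, and therefore $\omega\Lambda_{S^{c}}\geq\Lambda_{S^{c}}^{2}$.

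The main obstacle is reconciling the inequality actually delivered by this argument with the strict inequality $\omega\Lambda_{S^{c}}>1$ claimed in the statement. The clean deduction above gives $\omega\geq\Lambda_{S^{c}}$, which by itself does not immediately yield a bound involving the product $\omega\Lambda_{S^{c}}$ unless one exploits the definition of $\Lambda_{S^{c}}$ more carefully. I expect the resolution to come from unwinding the Poincar\'e inequality directly: since $f$ vanishes on $S$, the restriction of $f$ lies in $L_{2}(S^{c})$, so the removability inequality $\|f\|\leq\Lambda_{S^{c}}^{-1}\|\Delta f\|$ applies to $f$ itself. Combining this with the Bernstein inequality $\|\Delta f\|\leq\omega\|f\|$ valid for $f\in PW_{\omega}(G)$ gives $\|f\|\leq\Lambda_{S^{c}}^{-1}\omega\|f\|$, so $\|f\|\bigl(1-\omega\Lambda_{S^{c}}^{-1}\bigr)\leq 0$.

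Since $f\not\equiv 0$ we have $\|f\|>0$, hence $1-\omega\Lambda_{S^{c}}^{-1}\leq 0$, i.e. $\omega\geq\Lambda_{S^{c}}$. To upgrade to the strict product inequality $\omega\Lambda_{S^{c}}>1$, I would observe that genuine vanishing on a non-empty set $S$ together with non-triviality should make the Poincar\'e inequality strict for this particular $f$ (the extremal case forcing equality corresponds to degenerate configurations). The delicate point is justifying this strictness rigorously; if strictness of the Bernstein or Poincar\'e step cannot be established in general, the honest conclusion is the non-strict bound $\omega\Lambda_{S^{c}}\geq 1$, and the stated strict inequality would require the additional hypothesis that $S^{c}$ is genuinely removable with $\Lambda_{S^{c}}$ the \emph{best} (hence strictly attained) constant, which I would flag as the crux of the argument.
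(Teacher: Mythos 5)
Your approach coincides with the paper's: Theorem \ref{U1} is given no independent proof there, being presented only as a reformulation (contrapositive) of Theorem \ref{T1}, and your direct Poincar\'e-plus-Bernstein computation is exactly the right way to make that reformulation quantitative. The core of your argument is sound: $f|_{S}=0$ places $f$ in $L_{2}(S^{c})$, Definition \ref{D1} gives $\|f\|\leq \Lambda_{S^{c}}^{-1}\|\Delta f\|$, the Bernstein inequality gives $\|\Delta f\|\leq \omega\|f\|$, and $\|f\|>0$ yields $\omega\geq \Lambda_{S^{c}}$, equivalently $\omega\,\Lambda_{S^{c}}^{-1}\geq 1$.

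The discrepancy you flag is real, and it should be resolved against the statement rather than against your argument. The product form $\omega\Lambda_{S^{c}}$ does not follow from Definition \ref{D1} as written; it is consistent instead with the convention of \cite{Pe2008}, where a $\Lambda$-set satisfies $\|\varphi\|\leq \Lambda\|\Delta\varphi\|$, i.e.\ the reciprocal of the normalization adopted here. With the paper's own Definition \ref{D1} the correct conclusion is $\omega\geq\Lambda_{S^{c}}$. Your instinct that strictness cannot be forced is also correct: both the Poincar\'e and Bernstein steps can be attained simultaneously. The paper's own bipartite example (Lemma \ref{Ex}) exhibits this, since for $M\geq 2$ the eigenfunctions with eigenvalue $N$ are supported on $S^{c}$, hence vanish on $S$ and lie in $PW_{N}(G)$, while the relevant Poincar\'e constant equals $N$; so equality is attained and a strict inequality cannot hold in general. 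Do not spend effort trying to prove strictness. One small slip to correct in your write-up: after correctly deriving $\omega\,\Lambda_{S^{c}}^{-1}\geq 1$ you state the ``honest conclusion'' as $\omega\Lambda_{S^{c}}\geq 1$, which reintroduces the product form that your own computation does not support.
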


Another reformulation which also has a flavor of an uncertainty principle is the following.

\begin{thm}\label{U2}
For every $S\subset V(G)$ if the intersection $L_{2}(S)\cap PW_{\omega}(G)$ is not trivial then
$$
\omega \Lambda_{S}>1.
$$
\end{thm}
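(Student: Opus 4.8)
The plan is to recognize Theorem \ref{U2} as the support-side twin of Theorem \ref{U1}: in \ref{U1} the set $S$ is the \emph{zero} set of a nontrivial bandlimited function (so that function actually lives in $L_2(S^c)$), whereas here $S$ carries the \emph{support} and the function lives in $L_2(S)$. Consequently the proof should run on the same engine, namely pairing the two norm inequalities already isolated in the paper: the Poincar\'e-type inequality that membership in $L_2(S)$ supplies through Definition \ref{D1}, and the Bernstein inequality that characterizes $PW_\omega(G)$.

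First I would fix a nonzero $f\in L_2(S)\cap PW_\omega(G)$, which exists by hypothesis, so that $\|f\|>0$. Since $f\in L_2(S)$, Definition \ref{D1} applied with $U=S$ gives
$$
\|f\|\le \Lambda_S^{-1}\|\Delta f\|.
$$
Since $f\in PW_\omega(G)$, the Bernstein inequality (taken with $k=1$) gives
$$
\|\Delta f\|\le \omega\|f\|.
$$
Chaining the two estimates and cancelling the common factor $\|f\|\neq 0$ leaves the scalar comparison $\Lambda_S\le\omega$, i.e. $\omega\,\Lambda_S^{-1}\ge 1$, which is the uncertainty relation between the band limit $\omega$ and the Poincar\'e constant of the support that the theorem asserts.

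The one step that needs genuine care is the \emph{strictness}. The chain above delivers only the non-strict bound, and equality would force both links to be tight simultaneously: the Bernstein step would have to be saturated and the Poincar\'e infimum defining $\Lambda_S$ would have to be attained at $f$. Saturation of Bernstein means, spectrally, that the measure attached to $f$ is concentrated at the top frequency $\lambda=\omega$, that is $\Delta f=\omega f$. I would therefore dispose of the equality case by a spectral argument: if $\omega$ is not an eigenvalue of $\Delta$, then the Bernstein step is automatically strict and we are finished; if $\omega$ is an eigenvalue, I would argue that $f$ cannot simultaneously be a pure $\omega$-eigenfunction supported in $S$ and realize the infimum defining $\Lambda_S$ without the configuration degenerating. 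It is precisely this equality analysis, rather than the two-line inequality chain, that I expect to be the main obstacle.
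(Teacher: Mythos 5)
Your two-line chain -- the Poincar\'e inequality that Definition \ref{D1} supplies for $U=S$, followed by the Bernstein inequality for $PW_{\omega}(G)$, with the common factor $\|f\|\neq 0$ cancelled -- is exactly the argument the paper intends: Theorem \ref{U2} is stated without a separate proof, as a reformulation of Theorem \ref{T1}, and Theorem \ref{T1} rests on precisely this pairing. One small caveat you pass over silently: with Definition \ref{D1}'s normalization the chain yields $\Lambda_{S}\leq \omega$, i.e. $\omega\Lambda_{S}^{-1}\geq 1$, whereas the theorem asserts the product $\omega\Lambda_{S}>1$; the two coincide only if $\Lambda_{S}$ is read as the best constant $C$ in $\|\varphi\|\leq C\|\Delta\varphi\|$ (the convention of \cite{Pe2008}), i.e. the reciprocal of the quantity Definition \ref{D1} actually names $\Lambda_{S}$. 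You should state which normalization you are using rather than declaring your inequality to be literally ``what the theorem asserts.''

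The genuine gap is in your plan for the strictness. You correctly diagnose that equality forces $f$ to be an eigenfunction of $\Delta$ with eigenvalue $\omega$, supported in $S$ and attaining the Poincar\'e infimum, but your hope that this configuration ``degenerates'' is misplaced: it is realized in the paper's own complete bipartite example. There, any mean-zero function on one part of the graph is an eigenfunction of $\Delta$ supported on that part (eigenvalue $M$ for the part of size $N$), it lies in $L_{2}(S)\cap PW_{M}(G)$, and it attains the infimum defining the Poincar\'e constant of that part, so the chain closes with equality rather than strict inequality. Consequently the equality analysis you defer to cannot succeed in general; what this method (and contraposition of Theorem \ref{T1}) actually delivers is the non-strict bound. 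That is a defect of the theorem's statement rather than of your core argument, but a complete write-up should either prove only the non-strict version or add a hypothesis excluding eigenfunctions of $\Delta$ supported in $S$ with eigenvalue $\Lambda_{S}$.
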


We are going to obtain a lower bound of $\Lambda_{S}$ for any set $S$ by using only the geometry of  a graph. Given any subset $S \subset V(G)$, we set
$
w_S(v) = \sum_{s \in S} w(s,v),\>\>\>v \in V(G),
$
 and then introduce the following "measures of connectivity" between $S$ and its compliment $S^{c}$: 
$$
 D_{S}  =  D_{S\rightarrow S^{c}} = \sup_{s \in S} w_{S^{c}}(s),
 $$
 $$
K_S = K_{S\leftarrow S^{c}} = \inf_{v \in S^{c}} w_{S}(v).
$$
The next Theorem follows from a more general estimate which is proved  in \cite{PePe2010}, \cite{fp}.

\begin{thm} (\cite{PePe2010}, \cite{fp})\label{thm:main}
For any $S\subset V(G)$ and any $f\in L_{2}(G)$ the following inequality holds 
\[
 \| f \| \leq  \left(K_{S}\right)^{-1/2}\| \Delta^{1/2} f \| \>+  \left(\frac{D_{S}}{K_{S}}\right)^{1/2}\| f|_{S^{c}} \|.
\]
\end{thm}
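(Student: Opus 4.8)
The plan is to reduce the bound to a single application of the triangle (Minkowski) inequality on a weighted space of edges across the cut $(S,S^{c})$, feeding in the two connectivity constants exactly where the $\inf$ and the $\sup$ are taken.

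First I would dispose of the degenerate case: if $K_{S}=0$ the right-hand side is infinite and there is nothing to prove, so I may assume $K_{S}>0$, which guarantees $w_{S}(v)\geq K_{S}>0$ for every $v\in S^{c}$ (each vertex of $S^{c}$ genuinely sees $S$). Next I would record the Dirichlet identity for the quadratic form: from the definition of $\Delta$ and its self-adjointness, $\|\Delta^{1/2}f\|^{2}=\langle\Delta f,f\rangle=\frac{1}{2}\sum_{u,v\in V(G)}w(u,v)|f(u)-f(v)|^{2}$. Discarding the (nonnegative) contributions of edges internal to $S$ and to $S^{c}$ leaves the cut energy $E:=\sum_{s\in S}\sum_{v\in S^{c}}w(s,v)|f(s)-f(v)|^{2}$, so that $E\leq\|\Delta^{1/2}f\|^{2}$.

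The heart of the argument is to work in the Hilbert space of functions on the cross-pairs $\{(s,v):s\in S,\,v\in S^{c}\}$ equipped with the squared norm $\sum_{s,v}w(s,v)|\cdot|^{2}$, and to split the pointwise identity $f(v)=f(s)+\big(f(v)-f(s)\big)$ on each such pair. Minkowski's inequality in this space bounds the norm of $(s,v)\mapsto f(v)$ by the sum of the norms of $(s,v)\mapsto f(s)$ and $(s,v)\mapsto f(v)-f(s)$. I would then read off the three norms: the left norm is $\big(\sum_{v\in S^{c}}w_{S}(v)|f(v)|^{2}\big)^{1/2}$, the first right norm is $\big(\sum_{s\in S}w_{S^{c}}(s)|f(s)|^{2}\big)^{1/2}$, and the last is $E^{1/2}$. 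Inserting the connectivity estimates $w_{S}(v)\geq K_{S}$ (for $v\in S^{c}$) from below and $w_{S^{c}}(s)\leq D_{S}$ (for $s\in S$) from above, together with $E\leq\|\Delta^{1/2}f\|^{2}$, turns this into $K_{S}^{1/2}\|f\|_{L_{2}(S^{c})}\leq\|\Delta^{1/2}f\|+D_{S}^{1/2}\|f\|_{L_{2}(S)}$; dividing by $K_{S}^{1/2}$ produces the right-hand side with exactly the stated constants $(K_{S})^{-1/2}$ and $(D_{S}/K_{S})^{1/2}$.

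The step I expect to be most delicate is the bookkeeping that ties each connectivity constant to the correct side of the cut: $K_{S}$ can only be used as a lower bound summed over $v\in S^{c}$ (so it controls the values of $f$ on $S^{c}$), while $D_{S}$ can only be used as an upper bound summed over $s\in S$. Keeping the $\inf$/$\sup$ on their proper index sets is what determines which restricted norm appears on each side, and it is also what forces the triangle inequality to be performed at the level of norms: squaring through $|a+b|^{2}\leq 2|a|^{2}+2|b|^{2}$ would introduce a spurious factor $2$ and destroy the clean constants. The handling of vertices of $S^{c}$ with no neighbour in $S$ is automatically absorbed by the reduction to $K_{S}>0$, so no separate case analysis is needed.
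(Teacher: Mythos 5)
Your argument is sound and is, in substance, the standard proof of this Poincar\'e-type estimate (the paper itself gives no proof, deferring to \cite{PePe2010} and \cite{fp}): split $f(v)=f(s)+\bigl(f(v)-f(s)\bigr)$ over the weighted cut pairs, apply Minkowski there, discard the intra-component energy, and insert $w_S(v)\geq K_S$ on $S^c$ and $w_{S^c}(s)\leq D_S$ on $S$. The bookkeeping you flag as delicate is handled correctly. However, note that what your argument delivers is
\[
\| f|_{S^{c}} \| \leq  \left(K_{S}\right)^{-1/2}\| \Delta^{1/2} f \| + \left(\frac{D_{S}}{K_{S}}\right)^{1/2}\| f|_{S} \|,
\]
which is \emph{not} the displayed statement: there the full norm $\|f\|$ stands on the left and the restricted norm on the right is taken over $S^{c}$, not over $S$. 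This mismatch is a defect of the statement as printed, not of your proof. As literally written the theorem is false: in the paper's unweighted complete bipartite graph with $|S|=N>|S^{c}|=M$, take $f$ supported on $S$; then $\|f|_{S^{c}}\|=0$ and the claim reduces to $\langle\Delta f,f\rangle\geq K_S\|f\|^{2}=N\|f\|^{2}$, whereas a direct computation gives $\langle\Delta f,f\rangle=M\|f\|^{2}<N\|f\|^{2}$. Your version is the one the paper actually needs downstream: setting $f|_{S}=0$ in it gives $\|\varphi\|\leq (K_S)^{-1/2}\|\Delta^{1/2}\varphi\|$, hence $\|\varphi\|\leq (K_S)^{-1}\|\Delta\varphi\|$, for every $\varphi$ vanishing on $S$ --- i.e.\ the Poincar\'e constant of $S^{c}$ is at least $K_S$, which is exactly what Theorem \ref{T1}, the sampling corollary, and item 1 of Lemma \ref{Ex} (where the bound $K_S=N$ is attained by mean-zero functions supported on $S^{c}$) require. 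So the restricted norm on the right of the statement should read $\|f|_{S}\|$; if one insists on $\|f\|$ on the left rather than $\|f|_{S^{c}}\|$, the coefficient of $\|f|_S\|$ must be enlarged accordingly.

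One caveat you should make explicit: the identity $\|\Delta^{1/2}f\|^{2}=\langle\Delta f,f\rangle=\frac12\sum_{u,v}w(u,v)|f(u)-f(v)|^{2}$, and the step identifying $\sum_{v\in S^{c}}|f(v)|^{2}$ with $\|f|_{S^{c}}\|^{2}$, presuppose the counting-measure inner product rather than the degree-weighted one of (\ref{gg0}); since the operator (\ref{L}) is self-adjoint only for the former (and the paper's Fourier coefficients are also defined without the weight $\mu$), you should state that this is the inner product in force. Your reduction to $K_S>0$ is fine, and you are right that the unsquared triangle inequality is what preserves the clean constants.
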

From here we obtain that for any $\varphi\in L_{2}(S)$
$$
 \| \varphi\| \leq  \left(K_{S}\right)^{-1}\| \Delta \varphi \|,\>\>\>\varphi\in L_{2}(S).
$$

The optimality of the Poincare constant implies the inequality
$
\Lambda_{S}\geq K_{S}.
$
\begin{col}
Every $S\subset V(G)$ is a sampling  set for all functions in $PW_{\omega}(G)$ with any $\omega<K_{S}$. 
\end{col}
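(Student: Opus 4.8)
The plan is to read this Corollary as a direct consequence of the sampling criterion in Theorem \ref{T1} together with the geometric lower bound on the Poincare constant that was just established. Theorem \ref{T1} reduces the assertion ``$S$ is a sampling set for $PW_{\omega}(G)$'' to the single structural fact that the complement $S^{c}$ is $\Lambda_{S^{c}}$-removable with $\omega<\Lambda_{S^{c}}$. So the whole task is to produce a lower bound for the removability (Poincare) constant of the complement purely in terms of the connectivity quantities $D$ and $K$, and then to check that the strict bound on $\omega$ forces $\omega<\Lambda_{S^{c}}$.

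First I would record that the inequality $\Lambda_{U}\geq K_{U}$, displayed just above for the set $U=S$, in fact holds verbatim for an arbitrary subset of vertices, since nothing in its derivation used any special property of $S$. Indeed, applying Theorem \ref{thm:main} to a function $\varphi$ supported on $U$ makes the boundary term $\|\varphi|_{U^{c}}\|$ vanish, leaving $\|\varphi\|\leq (K_{U})^{-1/2}\|\Delta^{1/2}\varphi\|$; squaring and using $\|\Delta^{1/2}\varphi\|^{2}=\langle \Delta\varphi,\varphi\rangle\leq \|\Delta\varphi\|\,\|\varphi\|$ (Cauchy--Schwarz together with self-adjointness of $\Delta$) upgrades this to $\|\varphi\|\leq (K_{U})^{-1}\|\Delta\varphi\|$. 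By the optimality built into Definition \ref{D1}, this yields $\Lambda_{U}\geq K_{U}$ for every $U\subset V(G)$.

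Next I would apply this bound with $U$ equal to the set whose removability Theorem \ref{T1} requires, namely the complement of the prospective sampling set, obtaining a lower bound of the form $\Lambda\geq K$ for the relevant Poincare constant. Feeding this into Theorem \ref{T1}: whenever $\omega$ is strictly below the connectivity constant $K$, it is a fortiori strictly below $\Lambda$, so the hypothesis $0<\omega<\Lambda$ of Theorem \ref{T1} is satisfied and $S$ is a sampling set. The strict inequality in the Corollary is exactly what guarantees $\omega<\Lambda$ rather than merely $\omega\leq\Lambda$, which is why it cannot be relaxed to an equality.

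I expect the only genuine obstacle to be bookkeeping rather than analysis: one must apply the bound $\Lambda_{U}\geq K_{U}$ to the correct set, namely the complement $S^{c}$ whose removability is what Theorem \ref{T1} consumes, and keep the two connectivity measures $K_{S\leftarrow S^{c}}$ and $K_{S^{c}\leftarrow S}$ straight, since they count edges crossing the cut in opposite directions and are generally unequal. Once the correct set is fixed, the analytic content --- the passage from the $\Delta^{1/2}$ estimate to the $\Delta$ estimate and the appeal to optimality of the Poincare constant --- is routine given Theorem \ref{thm:main}.
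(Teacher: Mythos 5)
Your proposal follows essentially the same route as the paper: apply Theorem \ref{thm:main} to a function supported on the relevant set so that the boundary term $\|\varphi|_{S^{c}}\|$ drops out, upgrade the resulting $\Delta^{1/2}$ estimate to $\|\varphi\|\leq (K_{S})^{-1}\|\Delta\varphi\|$ (the paper leaves this step implicit; your Cauchy--Schwarz argument is the right way to fill it in), conclude $\Lambda\geq K_{S}$ by optimality of the Poincar\'e constant, and invoke Theorem \ref{T1}. Your warning about applying the removability bound to the correct set is well taken --- the paper's own notation here ($L_{2}(S)$ versus $L_{2}(S^{c})$, and $\Lambda_{S}$ used for ``the Poincar\'e constant of the complement'' in Corollary \ref{col}) is inconsistent at precisely this point --- but this is a matter of bookkeeping, not of substance, and the two arguments coincide.
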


The next Corollary follows from the general theory of Hilbert frames \cite{Gr}.
\begin{col} \label{cor3}
If $S$ is a sampling set for the space $PW_{\omega}(G)$ then there exists a frame $\{\Phi_{s}\}_{s\in S}$ in the space $PW_{\omega}(G)$ such that for any $f\in PW_{\omega}(G)$ the following reconstruction formula holds
\begin{equation}
f(v)=\sum_{s\in S}f(s)\Phi_{s}(v),\>\>\>v\in V(G).
\end{equation}

\end{col}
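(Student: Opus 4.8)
The plan is to recognize that the defining inequality of a sampling set is, after a single identification, exactly the defining inequality of a frame, and then to invoke the standard frame reconstruction machinery to produce the dual system $\{\Phi_s\}$. The first step is to observe that for each fixed $s\in S$ the point evaluation $\ell_s(f)=f(s)$ is a bounded linear functional on $PW_{\omega}(G)$: the upper sampling bound gives at once $|f(s)|^{2}\leq \sum_{s'\in S}|f(s')|^{2}\leq C\|f\|^{2}$, so that $\ell_s$ is continuous. Since $PW_{\omega}(G)$ is a closed subspace of $L_{2}(G)$ and hence itself a Hilbert space with the inner product $(\ref{gg0})$, the Riesz representation theorem supplies reproducing kernels $k_{s}\in PW_{\omega}(G)$ with $f(s)=\langle f,k_{s}\rangle$ for every $f\in PW_{\omega}(G)$.

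With this identification the two-sided sampling inequality becomes
$$
c\|f\|^{2}\leq \sum_{s\in S}|\langle f,k_{s}\rangle|^{2}\leq C\|f\|^{2},\qquad f\in PW_{\omega}(G),
$$
which is precisely the statement that $\{k_{s}\}_{s\in S}$ is a frame for $PW_{\omega}(G)$ with frame bounds $c,C$. At this point the general theory of Hilbert frames \cite{Gr} applies: the associated frame operator $S_{F}f=\sum_{s\in S}\langle f,k_{s}\rangle k_{s}$ is bounded, self-adjoint, positive, and boundedly invertible, with $cI\leq S_{F}\leq CI$. Defining the canonical dual frame by $\Phi_{s}=S_{F}^{-1}k_{s}$ and applying $S_{F}^{-1}$ to the identity $S_{F}f=\sum_{s\in S}\langle f,k_{s}\rangle k_{s}$ yields
$$
f=S_{F}^{-1}S_{F}f=\sum_{s\in S}\langle f,k_{s}\rangle\, S_{F}^{-1}k_{s}=\sum_{s\in S}f(s)\,\Phi_{s},
$$
so that the desired reconstruction formula holds in the norm of $PW_{\omega}(G)$, with each $\Phi_{s}$ again a genuine element of $PW_{\omega}(G)$.

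The remaining point, and the one requiring care, is to pass from norm convergence in $PW_{\omega}(G)$ to the pointwise statement $f(v)=\sum_{s\in S}f(s)\Phi_{s}(v)$ asserted in the Corollary. This is exactly where the boundedness of point evaluation is used a second time: evaluation $\ell_{v}$ at an arbitrary $v\in V(G)$ is continuous on $PW_{\omega}(G)$ by the same reproducing-kernel argument as above (note that $v$ need not lie in $S$, but the degrees $\mu(v)$ being controlled keeps $\ell_{v}$ bounded), so it commutes with the $L_{2}$-convergent series and the norm identity transfers to a pointwise one. I expect this transfer from abstract frame convergence to vertexwise convergence to be the only genuinely non-routine step; the identification with a frame and the existence of the dual system are immediate once the representers $k_{s}$ are in hand.
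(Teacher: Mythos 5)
Your proof is correct and is essentially the argument the paper intends: the paper gives no proof at all, stating only that the corollary ``follows from the general theory of Hilbert frames \cite{Gr},'' and your write-up (bounded point evaluations, Riesz representers $k_s$, the sampling inequality read as a frame inequality, and the canonical dual $\Phi_s=S_F^{-1}k_s$) is exactly the standard chain of reasoning that citation stands for. The final passage from norm convergence to vertexwise convergence is also fine, and in fact more routine than you suggest, since $|f(v)|\leq \mu(v)^{-1/2}\|f\|$ holds for every vertex with $\mu(v)>0$ directly from the definition of the inner product, independently of the sampling set.
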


Note, that the previous three  statements hold  for both finite and infinite graphs.

\section{Finite graphs}
We now consider a finite graph of $|V|=|V(G)|$ vertices. 
In what follows the notation $\mathcal{N}[a,\>b]$ is used for the number of eigenvalues of $\Delta$ in $[a,b]$.  The following Corollary follows from the previous theorem.
\begin{col}\label{col} (\cite{Pe2008}, \cite{fp})
For every  set $S\subset V(G)$  the following holds
\begin{enumerate}

\item the Poincare constant of its compliment satisfies the inequality $\Lambda_{S}\geq  K_{S}=\inf_{v \in S^{c}} w_{S}(v)$;

\item it is a uniqueness set for every space $PW_{\omega}(G)$ with $\omega<\Lambda_{S}$;

\item  $\mathcal{N}\left [0, \>\>    \Lambda_{S}   \right)\leq|S|$;

\item  $ \mathcal{N}\left [\Lambda_{S} ,\>\>\lambda_{|V|-1}\right]\geq |S^{c}|$;

\item  $
\lambda_{|S|}\geq   \Lambda_{S}$ .

\end{enumerate}
\end{col}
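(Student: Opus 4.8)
The plan is to dispatch the five items in order of increasing difficulty, reducing each to material already assembled above. Item (1) requires essentially no new work: it is exactly the inequality $\|\varphi\|\le (K_S)^{-1}\|\Delta\varphi\|$ recorded right after Theorem~\ref{thm:main}. One takes $\varphi\in L_2(S)$, so that the term $\|f|_{S^c}\|$ drops out of that theorem, leaving $\|\varphi\|\le (K_S)^{-1/2}\|\Delta^{1/2}\varphi\|$, and then applies Cauchy--Schwarz to $\|\Delta^{1/2}\varphi\|^2=\langle\Delta\varphi,\varphi\rangle\le\|\Delta\varphi\|\,\|\varphi\|$. Since $\Lambda_S$ is by definition the best (largest) constant in the Poincar\'e inequality on $L_2(S)$, this forces $\Lambda_S\ge K_S$.

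For item (2) I would use that a sampling set is automatically a uniqueness set: if $f\in PW_\omega(G)$ vanishes on the relevant vertex set then the lower frame bound $c\|f\|^2\le\sum_{s}|f(s)|^2=0$ gives $f=0$. Thus it suffices to apply the removability/sampling criterion of Theorem~\ref{T1}, but now with the \emph{optimal} Poincar\'e constant $\Lambda_S$ in place of the cruder $K_S$ furnished by item (1); this pushes the admissible bandwidth up to $\omega<\Lambda_S$, and uniqueness is then immediate. Equivalently, one reads off the contrapositive of the uncertainty principle in Theorem~\ref{U2}: $\omega<\Lambda_S$ forces the pertinent intersection $L_2(\cdot)\cap PW_\omega(G)$ to be trivial.

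The real content is in items (3)--(5), which are equivalent to one another and form the heart of the corollary. Set $m=\mathcal N[0,\Lambda_S)$ and let $E=\mathrm{span}\{e_{\lambda_j}:\lambda_j<\Lambda_S\}$, so $\dim E=m$. The key step is a dimension count. Every nonzero $f\in E$ obeys the strict Bernstein bound $\|\Delta f\|\le\bigl(\max_{\lambda_j<\Lambda_S}\lambda_j\bigr)\|f\|<\Lambda_S\|f\|$, whereas the Poincar\'e inequality of item (1) forces $\|\Delta\varphi\|\ge\Lambda_S\|\varphi\|$ on $L_2(S)$; these two estimates cannot both hold for a common nonzero vector, so $E\cap L_2(S)=\{0\}$. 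The two subspaces therefore sum directly inside $L_2(G)$, whence $\dim E+|S|\le|V|$. This yields the eigenvalue count of item (3) and, reindexing the spectrum, the inequality $\lambda_{|S|}\ge\Lambda_S$ of item (5); item (4) follows at once by complementation, $\mathcal N[\Lambda_S,\lambda_{|V|-1}]=|V|-\mathcal N[0,\Lambda_S)$.

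The main obstacle I anticipate lies entirely in item (3): pinning down the trivial-intersection step together with the bookkeeping of strict versus non-strict inequalities at the endpoint $\Lambda_S$. One must exploit that the half-open interval $[0,\Lambda_S)$ excludes eigenvalues exactly equal to $\Lambda_S$, so that the Bernstein estimate on $E$ is \emph{strict} and genuinely incompatible with the Poincar\'e lower bound. Equally delicate is verifying that the direct-sum inequality produces precisely the claimed cardinality: comparing $E$ against $L_2(S)$ gives $\dim E\le|V|-|S|$, so reaching the index stated in items (3) and (5) requires keeping the roles of $S$ and its complement $S^c$ carefully straight throughout.
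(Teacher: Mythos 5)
The paper offers no proof of this Corollary (it is attributed to \cite{Pe2008}, \cite{fp} and said to follow from Theorem \ref{thm:main}), so your argument has to stand on its own. Items (1) and (2), and the reduction of (4) and (5) to (3), are in order. The genuine gap is precisely the one you flag at the end but do not resolve: your trivial-intersection argument is run against the wrong subspace, and as written it proves $\mathcal{N}\left[0,\Lambda_S\right)\le|S^{c}|$ rather than the claimed $\mathcal{N}\left[0,\Lambda_S\right)\le|S|$. From $E\cap L_2(S)=\{0\}$ you can only get $\dim E\le |V|-\dim L_2(S)=|S^{c}|$. This is not fixable by bookkeeping; it comes from invoking the Poincar\'e inequality on $L_2(S)$ (functions supported on $S$) where the statement needs the Poincar\'e inequality on $L_2(S^{c})$ (functions vanishing on $S$). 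The Corollary's own wording in item (1), ``the Poincar\'e constant of its compliment,'' is the clue: the constant $\Lambda_S$ used in items (2)--(5) is the removability constant of $S^{c}$ in the sense of Definition \ref{D1} and Theorem \ref{T1} --- that is exactly what makes $S$, and not $S^{c}$, a uniqueness set for $PW_{\omega}(G)$ with $\omega<\Lambda_S$. The bipartite example of Lemma \ref{Ex} confirms this reading: there $\Lambda_S=N=|S|$ is the best Poincar\'e constant for functions vanishing on $S$, while the best constant for $L_2(S)$ is $M=|S^{c}|$.

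Once the roles are straightened out, the count is immediate and no direct-sum estimate is needed: if a nonzero $f\in E=\mathrm{span}\{e_{\lambda_j}:\lambda_j<\Lambda_S\}$ vanished on $S$, it would satisfy both the strict Bernstein bound $\|\Delta f\|<\Lambda_S\|f\|$ (exactly as you argue) and $\|f\|\le\Lambda_S^{-1}\|\Delta f\|$ (removability of $S^{c}$), a contradiction; hence the restriction map $E\to \mathbf{C}^{S}$, $f\mapsto f|_{S}$, is injective and $\dim E\le|S|$. The same confusion surfaces, more mildly, in your item (2): Theorem \ref{T1} converts removability of $S^{c}$ into the sampling property of $S$, so the ``optimal constant'' to feed into it is the Poincar\'e constant of $S^{c}$, not the constant for $L_2(S)$ that your item (1) establishes. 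In fairness, the paper's own line after Theorem \ref{thm:main} conflates the two sets in exactly the same way, but a correct proof of items (3)--(5) has to resolve the ambiguity rather than inherit it.
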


The following test for cut-off frequency can be used for finite graphs.

\begin{thm}\label{coff}
Let $S\subset V(G) $ and $S^{c} =V(G)\setminus S$. Let $\mathcal{L}$  be a matrix which
is obtained from the matrix of $\Delta$   by replacing by zero columns and rows corresponding to the set $S$.
Then $S$ is a sampling set for all signals $f \in  PW_{\omega}(G)$  with $\omega<\sigma$ where $\sigma$ is the smallest positive eigenvalue of $\mathcal{L}$.

\end{thm}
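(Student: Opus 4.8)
\noindent
The plan is to realize the matrix $\mathcal{L}$ as the compression of $\Delta$ to $L_2(S^{c})$, to deduce from its smallest positive eigenvalue $\sigma$ the removability bound $\Lambda_{S^{c}}\geq\sigma$, and then to invoke Theorem~\ref{T1}. First I would observe that deleting (zeroing) the rows and columns indexed by $S$ in the matrix of $\Delta$ is precisely the operator $\mathcal{L}=P\Delta P$, where $P$ is the orthogonal projection of $L_{2}(G)$ onto $L_{2}(S^{c})$. Indeed, the coordinate functions $\delta_{v}$ remain pairwise orthogonal in the weighted inner product \eqref{gg0}, so $P$ simply sets the values on $S$ equal to zero, and in coordinates $(P\Delta P)_{vu}$ equals $\Delta_{vu}$ when $v,u\in S^{c}$ and vanishes otherwise. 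Since $\Delta$ is self-adjoint and $P$ is an orthogonal projection, $\mathcal{L}$ is self-adjoint and nonnegative; its kernel contains all of $L_{2}(S)$, and on $L_{2}(S^{c})$ it acts as the compression $A=P\,\Delta|_{L_{2}(S^{c})}$.

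The next step is a quadratic-form identity. For $\varphi\in L_{2}(S^{c})$ we have $P\varphi=\varphi$, and using self-adjointness of $P$ one gets $\langle\mathcal{L}\varphi,\varphi\rangle=\langle\Delta\varphi,\varphi\rangle$. Assuming the mild nondegeneracy that $S^{c}$ contains no entire connected component (equivalently, that $G$ is connected and $S\neq\emptyset$), the compression $A$ has trivial kernel, so the smallest eigenvalue of $A$ coincides with the smallest positive eigenvalue $\sigma$ of $\mathcal{L}$. The Rayleigh--Ritz inequality for $A$ then yields $\langle\Delta\varphi,\varphi\rangle=\langle A\varphi,\varphi\rangle\geq\sigma\|\varphi\|^{2}$ for every $\varphi\in L_{2}(S^{c})$.

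The decisive move is to convert this quadratic-form bound into the operator-norm bound of Definition~\ref{D1}. By Cauchy--Schwarz $\langle\Delta\varphi,\varphi\rangle\leq\|\Delta\varphi\|\,\|\varphi\|$; combining with the previous estimate and cancelling one factor of $\|\varphi\|$ gives $\sigma\|\varphi\|\leq\|\Delta\varphi\|$, i.e. $\|\varphi\|\leq\sigma^{-1}\|\Delta\varphi\|$ for all $\varphi\in L_{2}(S^{c})$. Hence $S^{c}$ is $\sigma$-removable, and by optimality of the Poincar\'e constant $\Lambda_{S^{c}}\geq\sigma$. Since $\omega<\sigma\leq\Lambda_{S^{c}}$, Theorem~\ref{T1} applies and $S$ is a sampling set for $PW_{\omega}(G)$.

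I expect the only genuine obstacle to be the bridge in the third paragraph: $\sigma$ is intrinsically a Rayleigh-quotient (quadratic-form) quantity, whereas removability is phrased through $\|\Delta\varphi\|$. One should \emph{not} hope for the equality $\sigma=\Lambda_{S^{c}}$ in general (the example of a single edge already shows $\sigma<\Lambda_{S^{c}}$, since $\Delta\varphi$ has mass on $S$ that the form $\langle\Delta\varphi,\varphi\rangle$ does not see); only the inequality $\sigma\leq\Lambda_{S^{c}}$ is needed, and it is obtained cheaply from Cauchy--Schwarz. This is exactly what makes $\omega<\sigma$ a conservative but fully computable sufficient condition. A secondary point requiring care is the kernel bookkeeping justifying the phrase ``smallest positive eigenvalue,'' which is precisely the nondegeneracy assumption that $S^{c}$ carries no zero-eigenfunction.
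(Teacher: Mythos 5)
Your proof is correct and follows essentially the same route as the paper's: identify $\mathcal{L}$ with the compression of $\Delta$ to $L_{2}(S^{c})$, establish the quadratic-form identity $\langle\mathcal{L}\varphi,\varphi\rangle=\langle\Delta\varphi,\varphi\rangle$ together with strict positivity on $L_{2}(S^{c})$, bound the Rayleigh quotient below by $\sigma$, convert this to $\|\varphi\|\leq\sigma^{-1}\|\Delta\varphi\|$, and invoke Theorem~\ref{T1}. The differences are only presentational: you make explicit the Cauchy--Schwarz bridge that the paper leaves implicit in passing from $\|\varphi\|\leq\sigma^{-1/2}\|\Delta^{1/2}\varphi\|$ to $\|\varphi\|\leq\sigma^{-1}\|\Delta\varphi\|$, and you state the connectedness/nonemptiness hypothesis that the paper uses tacitly when asserting that only global constants lie in the kernel of $\Delta$.
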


\begin{proof}
According to Definition \ref{D1} and Theorem \ref{T1} it suffices to show that $S^{c}$ is a $\Lambda$-set for $\Lambda=1/\sigma$, i.e., for any $\varphi\in L_{2}(G)$ whose restriction to $S$ is zero one has 
\begin{equation}
\|\varphi\|\leq \frac{1}{\sigma}\|\Delta\varphi\|,\>\>\>\varphi|_{S}=0.
\end{equation}

Note, that since matrix of $\Delta$ is symmetric and since in the later we replace by zero columns and rows with the same set of indices the matrix $\mathcal{L}$ is also symmetric. Since $\Delta$ is non-negative the matrix $\mathcal{L}$ is also non-negative. It follows from the fact that if $\varphi$ belongs to the subspace $L_{2}(S^{c})$ of all $\varphi\in L_{2}(G)$ such that $\varphi|_{S}=0$ then 
$$
\left<\mathcal{L} \varphi, \varphi\right>=\left<\Delta \varphi, \varphi\right>\geq 0.
$$
\begin{rem}
Warning: the equality
$$
\left<\mathcal{L} \varphi, \varphi\right>=\left<\Delta \varphi, \varphi\right>
$$
holds for every $\varphi$ in the space $L_{2}(S^{c})$, but in general even for functions $\varphi$ in $L_{2}(S^{c})$ there is no equality 
$$
\mathcal{L}\varphi=\Delta\varphi.
$$
\end{rem}

Now we are going to show that $\mathcal{L}$ is strictly positive on $L_{2}(S^{c})$. Indeed, it is
clear that the subspace $L_{2}(S^{c})$ is invariant with respect to $\Delta$. This fact allows to identify $\mathcal{L}$ with an operator  in  $L_{2}(S^{c})$.
If $\varphi\in L_{2}(S^{c})$ is not identical zero but $\Delta\varphi=0$ then
$$
0=\left<\mathcal{L} \varphi, \varphi\right>=\left<\Delta \varphi, \varphi\right>=\left<\Delta^{1/2}\varphi, \Delta^{1/2}\varphi\right>=\|\Delta^{1/2}\varphi\|^{2},
$$
which implies that $\Delta^{1/2}\varphi=0$ and then $\Delta\varphi=0$. as the formula 
$$
\Delta\varphi(v)=\sum_{u\sim v}w(u,v)(\varphi(v)-\varphi(u))
$$
shows, only functions which are constant on entire graph belong to the kernel of $\Delta$.  Since constants do not belong to $L_{2}(S^{c})$ it implies strict positivity of the operator $\mathcal{L}$ on $L_{2}(S^{c})$.

Let $0<\sigma=\sigma_{0}\leq \sigma_{1}\leq...\leq \sigma_{m-1},\>\>m=|S^{c}|$, be the set of eigenvalues of $\mathcal{L}$ counting with their multiplicities and $e_{0},..., e_{m-1}$ be the corresponding
set of orthonormal eigenvectors that forms a basis in $L_{2}(S^{c})$.

For $\varphi\in \Delta$ we have
\begin{equation}\label{*}
\frac{   \|\Delta^{1/2}\varphi\|^{2} }{\|\varphi\|^{2}}=\frac{  \left<\Delta^{1/2}\varphi, \Delta^{1/2}\varphi\right>  }{\|\varphi\|^{2}}=\frac{   \left<\Delta \varphi, \varphi\right> }{\|\varphi\|^{2}}=\frac{ \left<\mathcal{L} \varphi, \varphi\right>   }{\|\varphi\|^{2}}.
\end{equation}
If $\varphi-\sum_{j=0}^{m-1}c_{j}e_{j}$ where $c_{j}=\left<\varphi, e_{j}\right>$, then

$$
\mathcal{L}\varphi=\sum_{j=0}^{m-1}\sigma_{j}c_{j}e_{j}
$$
and by Parseval equality
$$
\left<\mathcal{L}\varphi,\varphi\right>=\sum_{j=o}^{m-1}\sigma_{j}|c_{j}|^{2}\geq \sigma\|\varphi\|^{2},
$$
where $\sigma$ is the smallest eigenvalue of $\mathcal{L}$.

This inequality along with (\ref{*}) imply that for any $\varphi$ whose restriction to $S$ is zero we have
$$
\frac{   \|\Delta^{1/2}\varphi\|^{2} }{\|\varphi\|^{2}}\geq \sigma,\>\>\varphi\in L_{2}(S^{c}),
$$
or
\begin{equation}
\|\varphi\|\leq\frac{1}{\sqrt{\sigma}}\|\Delta^{1/2}\varphi\|, \>\>\varphi\in L_{2}(S^{c}).
\end{equation}
From here we obtain  the inequality
\begin{equation}
\|\varphi\|\leq\frac{1}{\sigma}\|\Delta\varphi\|, \>\>\varphi\in L_{2}(S^{c}).
\end{equation}
In other words, $S^{c}$ is a $\Lambda$-set with $\Lambda=1/\sigma$. Therefore, by Theorem \ref{T1}, $S=V(G)\setminus S^{c}$ is a uniqueness set for all signals $\varphi \in PW_{\omega}(G)$ with $\omega<\sigma$. Theorem is proved.
\end{proof}
It should be noted that the above Theorem was motivated by a similar result obtained in \cite{NGO}.

\section{Example of a bipartite graph} 
A non-weighted complete bipartite graph $G$ consists of two disjoint sets of vertices  (components)  $S$ and $S^{c}$ where 
	
	\begin{enumerate}
	
	\item every vertex in one component is connected to every vertex in another;
	
	\item  no edges inside of components;
	
	\item  every edge has weight one.
	
	\end{enumerate} 
Let $|S|=N$,  $\>\>|S^{c}|=M$, and $\>\>N>M$.
 One can show that in this case  the Laplacian $\Delta$ on graph $G$  has eigenvalues $\lambda_{0},...,\lambda_{M+N-1}$, where
$$
\lambda_{0}=0<\lambda_{1}=...=\lambda_{N-1}=M<
$$
$$
\lambda_{N}=...=\lambda_{N+M-2}=N<\lambda_{N+M-1}=N+M.
$$

One can verify that the following statement holds.

\begin{lem}\label{Ex}
In the case of a bipartite graph  all the statements of the  Corollary \ref{col} are sharp and we have:
\begin{enumerate}

\item for the Poincare constant of $S^{c}$  we have $
\Lambda_{S}=K_{S}=|S|=N;
$

\item the set $S$ is the uniqueness set for the span of the first $N$ eigenfunctions $e_{0},...,e_{N-1}$, where $\>\>\Delta e_{j}=\lambda_{j}e_{j}$;

\item  $\mathcal{N}\left [0, \>\>    N   \right)= N$;

\item  $ \mathcal{N}\left [N ,\>\>\lambda_{N+M-1}\right]= M$;

\item  $
\lambda_{N}=   N$ .

\end{enumerate}
	
\end{lem}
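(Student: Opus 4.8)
The plan is to reduce every assertion to the explicit spectral data of the complete bipartite Laplacian, since once the eigenvalues and eigenfunctions are in hand four of the five items become bookkeeping and only item (1) carries genuine content. First I would record the eigenfunctions matching the stated eigenvalues: the constant $\mathbf{1}$ for $\lambda=0$; for $\lambda=M$ the $(N-1)$-dimensional space of functions supported on $S$ with $\sum_{s\in S}f(s)=0$; for $\lambda=N$ the $(M-1)$-dimensional space of functions supported on $S^{c}$ with $\sum_{v\in S^{c}}f(v)=0$; and for $\lambda=N+M$ the single function equal to $-M$ on $S$ and $N$ on $S^{c}$. A short computation with $(\Delta f)(s)=Mf(s)-\sum_{v\in S^{c}}f(v)$ for $s\in S$ and $(\Delta f)(v)=Nf(v)-\sum_{s\in S}f(s)$ for $v\in S^{c}$ verifies each; note that these are honest eigenvectors of the matrix of $\Delta$, so the list is independent of the choice of inner product.

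For item (1), the geometry gives $K_{S}=\inf_{v\in S^{c}}w_{S}(v)=N$ at once, because every $v\in S^{c}$ is joined to all $N$ vertices of $S$ by unit weights, and $|S|=N$ by definition; the inequality $\Lambda_{S}\geq K_{S}=N$ is then exactly Corollary \ref{col}(1). For the reverse inequality I would use the $\lambda=N$ eigenfunctions: any such $\varphi$ lies in $L_{2}(S^{c})$ and satisfies $\Delta\varphi=N\varphi$, so that $\|\Delta\varphi\|=N\|\varphi\|$ regardless of the inner product; hence the Poincaré inequality $\|\varphi\|\leq\Lambda^{-1}\|\Delta\varphi\|$ is already tight at $\Lambda=N$ and fails for every $\Lambda>N$, forcing $\Lambda_{S}\leq N$. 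Thus $\Lambda_{S}=K_{S}=N$. Items (3), (4) and (5) follow by counting against the list: the eigenvalues below $N$ are $0$ together with the $(N-1)$-fold $M$, giving $\mathcal{N}[0,N)=N=|S|$; the eigenvalues in $[N,N+M]$ are the $(M-1)$-fold $N$ and the simple $N+M$, giving $M=|S^{c}|$; and $\lambda_{N}=N$ is the first index at which the value $N$ appears, so the bound $\lambda_{|S|}\geq\Lambda_{S}$ of Corollary \ref{col}(5) holds with equality.

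For item (2), the first $N$ eigenfunctions are exactly those with eigenvalue strictly below $N$, so their span equals $PW_{\omega}(G)$ for any $\omega\in[M,N)$; since the Poincaré constant of $S^{c}$ equals $N$, the set $S^{c}$ is $N$-removable and Theorem \ref{T1} shows $S$ is a sampling set, hence a uniqueness set, for this span. Directly: if $f$ is in the span and $f|_{S}=0$, then $f$ restricted to $S^{c}$ equals the mean of $f|_{S}$, which is now zero, so $f=0$. Sharpness is witnessed by the same $\lambda=N$ eigenfunctions: each is a nonzero element of $PW_{N}(G)$ supported on $S^{c}$, hence vanishing on $S$, so $S$ fails to be a uniqueness set for $PW_{N}(G)$ and the threshold $N$ cannot be improved.

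The main point to get right is that a single family — the $\lambda=N$ eigenfunctions — simultaneously forces $\Lambda_{S}\leq N$, realizes equality in the Poincaré inequality, and breaks uniqueness at $\omega=N$; recognizing that these extremal functions live in $L_{2}(S^{c})$ is the crux that ties the five sharpness statements together. This family is nonempty precisely when $M\geq 2$, which is also the range in which the stated eigenvalue list is valid (for $M=1$ the block of eigenvalues equal to $N$ disappears and item (5) degenerates), so I would carry out the argument under the standing assumption $M\geq 2$.
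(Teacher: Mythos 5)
The paper offers no proof of this lemma at all (it states the eigenvalues of the complete bipartite Laplacian just before it and says ``one can verify''), and your argument supplies exactly the intended verification: you exhibit the eigenspaces explicitly, use the $\lambda=N$ eigenfunctions supported on $S^{c}$ simultaneously to force $\Lambda_{S}\leq N$ (so $\Lambda_{S}=K_{S}=N$) and to break uniqueness at $\omega=N$, and read off items (3)--(5) by counting multiplicities; I checked the eigenfunction computations and the extremal argument and they are correct in either of the paper's two (mutually inconsistent) inner products, since $\Delta\varphi=N\varphi$ gives $\|\Delta\varphi\|=N\|\varphi\|$ in any norm. Your observation that the block of eigenvalues equal to $N$ is nonempty only for $M\geq 2$, so that items (1) and (5) degenerate when $|S^{c}|=1$, is a genuine refinement the paper misses and is worth keeping as a standing hypothesis.
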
	
\section{Main result}

The next Theorem is a consequence of the Theorem \ref{T100} and Corollary \ref{cor3}.

\begin{thm}\label{M}
Assume that $f\in PW_{\omega}(G)$ and  $S\subset V(G)$ is a sampling  set for $PW_{\omega}(G)$.  Then the solution $g(t,  v)$ to the Cauchy problem (\ref{C1}) at any point $(t, v)\in \mathbf{R}\times V(G)$ is completely determined by the values of $\Delta f$ on $S$ and by the set of samples $g(k\pi/\omega, s), k\in \mathbf{N}\cup{\{0\}}, s\in S$. Moreover, the explicit reconstruction formula is given by
\begin{equation}
g(t, v)=
$$
$$
\sum_{s\in S}\left(it\ sinc\left(\frac{\omega  t}{\pi}\right)\left(\Delta f\right)(s)+
sinc\left(\frac{\omega  t}{\pi}\right)g(0, s)\right) +
$$
$$
\sum_{s\in S}\left(\sum_{k\in \mathbf{N}, k\neq 0}\frac{\omega t}{k\pi}  sinc\left(\frac{\omega  t}{\pi}-k\right)g\left(\frac{k\pi}{\omega}, s\right)\right)\Phi_{s}(v), 
 \end{equation}
where $ g(0, s)=f(s)$ and $\{\Phi_{s}\}_{s\in S}$ is a frame in $PW_{\omega}(G)$ described in Corollary \ref{cor3}.

\end{thm}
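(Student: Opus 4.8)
The plan is to derive Theorem \ref{M} as a direct synthesis of its two quoted ingredients. Theorem \ref{T100} already reconstructs the full space-time solution $g(t,v)$ from only three \emph{spatial} data functions, namely $\Delta f$, the initial datum $f=g(0,\cdot)$, and the discrete family of time-slices $g(k\pi/\omega,\cdot)$ taken at the instants $t=k\pi/\omega$. Corollary \ref{cor3}, on the other hand, reconstructs any single function in $PW_{\omega}(G)$ from its values on the sampling set $S$ via the frame $\{\Phi_{s}\}_{s\in S}$. So the strategy is simply to feed each of the spatial functions appearing in (\ref{VT}) through the frame expansion of Corollary \ref{cor3}. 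The one structural point that must be checked first is that \emph{all} of these spatial functions live in the same space $PW_{\omega}(G)$, so that a single frame $\{\Phi_{s}\}$ applies to each simultaneously.

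First I would verify the bandlimiting. Since $f\in PW_{\omega}(G)$, its spectral measure is supported in $[0,\omega]$. Both $\Delta$ and the unitary evolution $e^{it\Delta}$ are functions of $\Delta$ and hence act on the spectral side as multiplication by $\lambda$ and by $e^{it\lambda}$ respectively; neither multiplier enlarges the support. Consequently $\Delta f\in PW_{\omega}(G)$, and for each fixed instant the slice $g(k\pi/\omega,\cdot)=e^{i(k\pi/\omega)\Delta}f$ also lies in $PW_{\omega}(G)$. (Equivalently, one checks the Bernstein inequality of Section III directly: $\|\Delta^{k}(\Delta f)\|=\|\Delta^{k+1}f\|\le\omega^{k}\|\Delta f\|$ because $\lambda^{2k+2}\le\omega^{2k}\lambda^{2}$ on $[0,\omega]$.)

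With the bandlimiting in hand, I would invoke Corollary \ref{cor3}, which with $S$ a sampling set produces a frame $\{\Phi_{s}\}_{s\in S}\subset PW_{\omega}(G)$ satisfying $h(v)=\sum_{s\in S}h(s)\Phi_{s}(v)$ for every $h\in PW_{\omega}(G)$. Applying this expansion separately to $h=\Delta f$, to $h=g(0,\cdot)=f$, and to each slice $h=g(k\pi/\omega,\cdot)$ rewrites every spatial coefficient in (\ref{VT}) as a sum over $S$ of its samples times $\Phi_{s}(v)$; in particular the $k=0$ data become $g(0,s)=f(s)$ and $(\Delta f)(s)$. Substituting these expansions into (\ref{VT}) and collecting the common factor $\Phi_{s}(v)$ yields exactly the claimed reconstruction formula, in which every data value is a sample of $\Delta f$ or of $g$ on $S\times\{k\pi/\omega\}$, $k\in\mathbf{N}\cup\{0\}$.

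The step requiring the most care is the interchange of the time-summation over $k\in\mathbf{N}$ with the frame expansion over $s\in S$ that is needed to pull out $\Phi_{s}(v)$. For each fixed $t$ the scalar coefficients $\tfrac{\omega t}{k\pi}\,sinc\!\left(\tfrac{\omega t}{\pi}-k\right)$ decay like $O(k^{-2})$, while by unitarity of $e^{it\Delta}$ all slices satisfy $\|g(k\pi/\omega,\cdot)\|=\|f\|$; hence the $k$-series converges absolutely in $L_{2}(G)$ for every fixed $t$. Combined with the Bessel bound of the frame $\{\Phi_{s}\}$, this absolute convergence legitimizes the rearrangement (a Fubini-type argument in the abstract-valued setting of Theorem \ref{T100}), after which the stated identity is a purely algebraic regrouping of (\ref{VT}). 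I expect everything else to be routine bookkeeping.
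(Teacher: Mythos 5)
Your proposal is correct and follows exactly the route the paper intends: the paper offers no written proof beyond the remark that Theorem \ref{M} is a consequence of Theorem \ref{T100} and Corollary \ref{cor3}, and your argument is precisely that synthesis, applying the frame expansion of Corollary \ref{cor3} to each spatial function in formula (\ref{VT}). The details you supply (that $\Delta f$ and each slice $g(k\pi/\omega,\cdot)$ remain in $PW_{\omega}(G)$, and the justification for interchanging the $k$-sum with the frame expansion) are exactly the points the paper leaves implicit, and your version also makes clear that the factor $\Phi_{s}(v)$ should multiply the first sum over $S$ as well, which is evidently a typographical slip in the paper's displayed formula.
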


\section{Conclusion}

In  the present paper we consider non-stationary signals which propagate on a combinatorial graph and whose evolution is governed by a Schr\"{o}dinger type equation with a combinatorial Laplace operator on the right side.  It is shown that such signals  can be perfectly reconstructed from their samples on the graph and on the time axis. The main new result of the paper is the Theorem  \ref{M} which relies on the other new fact formulated in Theorem \ref{T100}.

Theorems \ref{thm:main} and \ref{coff} contain  tests for estimating a cut-off frequency of a given sampling set. It is shown in Lemma \ref{Ex} that Theorem \ref{thm:main} cannot be improved in general.

In Theorems \ref{U1} and \ref{U2} two new forms of uncertanty principle on graphs are formulated.

\end{document}